\documentclass[11pt]{article}
\usepackage{mathrsfs}
\usepackage{latexsym}
\usepackage{bbm}
\usepackage{amsthm}
\usepackage{amsmath}
\usepackage{amsfonts}
\usepackage{amssymb}
\usepackage{multirow}
\setcounter{page}{1}
\usepackage{latexsym}
\usepackage[dvips]{graphicx}
\usepackage{overpic}
\usepackage{graphicx}

\setlength{\topmargin}{0in} \setlength{\oddsidemargin}{0cm}
\setlength{\textheight}{22.38cm} \setlength{\textwidth}{16cm}
\newtheorem{theorem}{\bf Theorem}[section]
{\vspace{2ex}}
\newtheorem{lemma}[theorem]{\bf Lemma}


\def \bS {\Bbb S}
\def \and {\, \mb
ox{\rm and}\, }

\def \supp {\,{\rm supp}\,}

\def \l {\left}
\def \r {\right}
\makeatletter

\newcommand{\Rmnum}[1]{\expandafter\@slowromancap\romannumeral #1@}

\makeatother

\begin{document}

\title{\bf Restriction  Theorem for Oscillatory Integral Operator with Certain Polynomial Phase}
\footnotetext{\footnotesize Research supported in part by the National Natural Science Foundation of China under grants 11471309, 11271162 and 11561062.}
\author{Shaozhen Xu\thanks{School of Mathematical Sciences,
        University of Chinese
        Academy of Sciences, Beijing 100049, P.R. China. E-mail address:
        {\it xushaozhen14b@mails.ucas.ac.cn}.}\,\,\,,\quad
        Dunyan Yan\thanks{School of Mathematical Sciences, University of
        Chinese Academy of Sciences, Beijing 100049, P. R. China. E-mail
        address: {\it ydunyan@ucas.ac.cn}.} }
\date{}
\maketitle{}

\begin{abstract}
 We consider the following oscillatory integral operator
\begin{equation}\label{opera-defi-1}
        T_{\alpha,m}f(x)=\int_{\mathbb R^n}e^{i(x_1^{\alpha_1} y_1^m+\cdots+x_n^{\alpha_n} y_n^m)}f(y)dy,
\end{equation}
        where the function $f$ is a Schwartz function.
        In this paper, the restriction theorem on $\mathbb{S}^{n-1}$ for this operator is obtained.
Moreover, we obtain a necessary condition which ensures the restriction theorem hold.
\end{abstract}
\textbf{Keywords}:\quad Restriction theorem,  Oscillatory integral operator,\quad $L^2$ boundedness,   Optimal estimate,   Necessary condition
\section{Introduction}
\label{}
The idea of restriction theorem for Fourier transform was due to E.M Stein in 1967. The following
classical result was obtained by Stein and Tomas \cite{tomas1975restriction} in 1975.\\

\noindent{\bf Theorem A}
    Let $\mathbb{S} \subset \mathbb{R}^n$ denote a manifold of dimension $n-1$ with nonzero Gaussian curvature and $\mathbb{S}_0$ be a compact subset of $\mathbb{S}$. Denote the measure on $\mathbb{S}$ induced by Lebesgue measure on $\mathbb{R}^n$ by $d\sigma$. Then
    \begin{equation*}
    \left(\int_{\mathbb{S}_0}|\hat{f}(\xi)|^q d\mathbf{\sigma}\right)^{1/q} \leq A(\mathbb{S}_0)  \|f\|_{L^p(\mathbb{R}^{n})}
    \end{equation*}
   for $f\in \mathscr{S}$, whenever
    $1\leq p \leq \frac{2n+2}{n+3}$   and   $q=\left(\frac{n-1}{n+1}\right)p'$,  where  $\frac{1}{p}+\frac{1}{p'}=1.$

   More details about this theorem can be found in \cite{stein1993harmonic}. Especially, the compact subset $\mathbb{S}_0$ can be $n-1$ dimensional sphere. Note that if $p=(2n+2)/(n+3)$ then $q=2$. Later, Knapp gave an example to determine the optimality of $q$ in Theorem A. On the other hand, it can be proved that no restriction theorem of any kind can hold for $f\in L^p(\mathbb{R}^n)$ when $p\geq 2n/(n+1)$, a conjecture naturally arose that the restriction theorem above extends to the range $1\leq p<2n/(n+1)$. Because of the close connection with other notable conjectures such as the Kakeya and Bochner-Riesz cojectures, as well as the local smoothing conjecture in PDE, the restriction problem has lead to tremendous interest.
    The connection can be found in \cite{wolff1999recent}, \cite{bourgain2000harmonic}, \cite{tao2004some}, \cite{laba2008harmonic}.

Actually, the operator $ T_{\alpha,m}$ can be seen as an extension of Fourier transform. Motivated by the restriction theorem of Fourier transform, we desire to get the similar result for $T_{\alpha,m}$. Now we formulate our main result.

\begin{theorem}\label{thm-1}
    In the notation \eqref{opera-defi-1}, both $\alpha_j(1\leq j\leq n)$ and $m$ are integers. Set  $\alpha_{\max}=\max\{\alpha_1, \cdots, \alpha_n\}$ and denote the the $n-1$ dimensional sphere by $\mathbb{S}^{n-1}$ . If $\alpha_j\geq 3(1\leq j\leq n), 2\leq n< m \leq n\alpha_{\max}$ and $1\leq p \leq \frac{2n\alpha_{\max}}{2n\alpha_{\max}-m}$, then the inequality
    \begin{equation}\label{main-result}
    \left(\int_{\mathbb{S}^{n-1}}|T_{\alpha,m}(f)|^2 d\mathbf{\sigma}\right)^{1/2} \leq A_{\alpha,n,m} \cdot \|f\|_{L^p(\mathbb{R}^{n})}
    \end{equation}
    holds.\\
\end{theorem}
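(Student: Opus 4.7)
The plan is to combine the $T^{*}T$-duality method with Riesz--Thorin interpolation. At the easy endpoint $p=1$, the pointwise bound $|T_{\alpha,m}f(x)|\le\|f\|_{L^1}$ together with $\sigma(\mathbb{S}^{n-1})<\infty$ yields \eqref{main-result} at once, so the entire range reduces to the critical endpoint $p_0=2n\alpha_{\max}/(2n\alpha_{\max}-m)$, whose conjugate is $p_0'=2n\alpha_{\max}/m$. At that endpoint the identity $\|T_{\alpha,m}f\|^2_{L^2(\mathbb{S}^{n-1})}=\langle T_{\alpha,m}^{*}T_{\alpha,m}f,f\rangle$ reduces our task to showing $\|T_{\alpha,m}^{*}T_{\alpha,m}f\|_{L^{p_0'}}\le A\|f\|_{L^{p_0}}$, where the kernel
$$K(y,y')=\int_{\mathbb{S}^{n-1}}e^{i\sum_{j=1}^n x_j^{\alpha_j}(y_j'^m-y_j^m)}\,d\sigma(x)$$
depends on $(y,y')$ only through $\lambda_j:=y_j'^m-y_j^m$. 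Performing the change of variables $z_j=y_j^m$ in each of the $2^n$ orthants (and decomposing $f$ by the signs of its arguments) turns $T_{\alpha,m}^{*}T_{\alpha,m}$ into a convolution with a fixed kernel $K'(\lambda)$ in the $z$-variable, at the price of Jacobian weights $\prod_j z_j^{(1-m)/m}$ appearing symmetrically on input and output.

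The heart of the argument is a decay estimate for $K'(\lambda)$, which I obtain by stationary phase on $\mathbb{S}^{n-1}$ for the polynomial phase $\Phi(x)=\sum_j x_j^{\alpha_j}\lambda_j$. The Lagrange multiplier equations $\alpha_j\lambda_j x_j^{\alpha_j-1}=2\mu x_j$ can be solved by $x_j\propto(\mu/\lambda_j)^{1/(\alpha_j-2)}$ precisely because the hypothesis $\alpha_j\ge 3$ rules out $\alpha_j-2=0$; the spherical constraint $\sum x_j^2=1$ then fixes $\mu$ in terms of $\lambda$. A direct computation shows that the Hessian of $\Phi$ restricted to the tangent space of the sphere is non-degenerate at these critical points, with size comparable to $|\lambda|$. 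After a smooth partition of unity on $\mathbb{S}^{n-1}$ separating the interior of each spherical orthant from equatorial neighborhoods $\{|x_j|<\varepsilon\}$, standard stationary phase applies on the former; on the latter the phase effectively decouples into fewer variables and iterated one-dimensional van der Corput bounds $|\int e^{i\lambda_j t^{\alpha_j}}dt|\lesssim|\lambda_j|^{-1/\alpha_j}$ take over. Balancing these contributions yields $|K'(\lambda)|\lesssim(1+|\lambda|)^{-m/\alpha_{\max}}$, an exponent calibrated exactly so that the relation $1/p_0-1/p_0'=1-m/(n\alpha_{\max})$ is matched.

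Given the kernel bound, a weighted Hardy--Littlewood--Sobolev inequality in the $z$-coordinates delivers the required $L^{p_0}\to L^{p_0'}$ estimate for $T_{\alpha,m}^{*}T_{\alpha,m}$; the Jacobian factors $z_j^{(1-m)/m}$ fall in the admissible $A_{p_0}$ range since $(1-m)/m\in(-1,0)$, so the weighted version of Hardy--Littlewood--Sobolev is available. Riesz--Thorin interpolation between this endpoint and the trivial $p=1$ bound then gives \eqref{main-result} on the full range $1\le p\le p_0$. The principal technical obstacle I anticipate is the stationary-phase analysis at the equators $\{x_j=0\}$ of $\mathbb{S}^{n-1}$: it is there that the critical-point manifold for $\Phi$ degenerates, and one must show that the ensuing singular contributions are nevertheless controlled by the single exponent $m/\alpha_{\max}$ dictated by the ``worst'' direction $\alpha_{\max}$, rather than a weaker product-type decay.
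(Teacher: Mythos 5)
Your overall strategy — the $T^*T$ reduction and an analysis of the spherical kernel $K(y,z)=\int_{\mathbb{S}^{n-1}}e^{i\sum_j x_j^{\alpha_j}(y_j^m-z_j^m)}d\sigma(x)$ — is the same as the paper's. But there is a genuine error in the central estimate, and the downstream boundedness argument would not close even after fixing it.

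The decay exponent you assert for the kernel is wrong. Write $\lambda_j=y_j^m-z_j^m$, so the kernel is the spherical oscillatory integral $I(\lambda)=\int_{\mathbb{S}^{n-1}}e^{i\sum_j\lambda_j x_j^{\alpha_j}}d\sigma(x)$. This integral lives entirely on the $x$-side; the parameter $m$ only enters through the definition of $\lambda$, so it cannot appear in the decay rate of $I$ as a function of $|\lambda|$. Taking $\lambda$ along the $e_j$-axis with $\alpha_j=\alpha_{\max}$, the phase is $\lambda_j x_j^{\alpha_j}$, degenerate of order $\alpha_j$ along the equator $\{x_j=0\}$, and van der Corput gives exactly $|I(\lambda)|\sim|\lambda|^{-1/\alpha_{\max}}$. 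The paper's Lemma \ref{lemma-5} proves precisely $|I(\lambda)|\lesssim|\lambda|^{-1/\alpha_{\max}}$ and, crucially, shows this exponent is \emph{sharp}. So your claimed bound $|K'(\lambda)|\lesssim(1+|\lambda|)^{-m/\alpha_{\max}}$ is off by a factor of $m$ in the exponent, and no ``balancing'' of stationary-phase and van der Corput contributions can improve $1/\alpha_{\max}$ to $m/\alpha_{\max}$ — the optimality part of the lemma rules that out. With the correct decay $|\lambda|^{-1/\alpha_{\max}}$, your change of variables $u_j=y_j^m$ followed by a convolution-type Hardy--Littlewood--Sobolev estimate does not reach $p_0=2n\alpha_{\max}/(2n\alpha_{\max}-m)$; it would need $m/\alpha_{\max}$ decay in $|u-v|$, which you don't have.

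The idea you are missing is that the $m$-dependence has to be extracted from the relation between $\lambda$ and $(y,z)$, not from the spherical integral. The paper bounds $|\lambda|\geq\sum_j|y_j^m-z_j^m|\geq\bigl|\,\|y\|_m^m-\|z\|_m^m\,\bigr|$ and thereby dominates $|K(y,z)|$ by $\bigl|\,\|y\|_m^m-\|z\|_m^m\,\bigr|^{-1/\gamma}$ with $\gamma\geq\alpha_{\max}$. This kernel is \emph{not} a convolution in any $z_j=y_j^m$ coordinates — it is singular along the entire $n{-}1$-dimensional level set $\|y\|_m=\|z\|_m$, which is a qualitatively different singularity than a point. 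The required $L^p\to L^{p'}$ boundedness is the content of Lemma \ref{lemma-2} (the operator $\mathcal{K}_a^b$ with $a=m$, $b=n\gamma$), whose scaling fixes $p=\frac{2n\gamma}{2n\gamma-m}$; taking $\gamma=\alpha_{\max}$ recovers the endpoint. A straightforward Hardy--Littlewood--Sobolev inequality — weighted or not — does not apply to such sphere-concentrated kernels, and the ``$A_{p_0}$ weight'' framing is not the right tool (Hardy--Littlewood--Sobolev with power weights is governed by Stein--Weiss-type conditions, not $A_p$ conditions, and in any case the kernel here is not of convolution type). So beyond the incorrect decay exponent, the final boundedness step of the proposal relies on a result that is not available in the form you invoke. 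You should replace the HLS step by an estimate for a kernel singular on a hypersurface of the form $\{\|y\|_m^m=\|z\|_m^m\}$; the paper's Lemma \ref{lemma-2} is exactly such an estimate.

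Two smaller remarks: the interpolation with $p=1$ is unnecessary, since Lemma \ref{lemma-2} directly gives the whole range $1<p\le p_0$ by letting $\gamma$ range over $[\alpha_{\max},\infty)$; and in your Lagrange-multiplier discussion the hypothesis $\alpha_j\geq3$ is indeed used, but not in the way you describe — its real role in the paper is to keep the second-order behavior of $\phi_\xi$ under control (so that the reduced Hessian is positive definite and the equatorial degeneracy has the stated order), not merely to avoid $\alpha_j-2=0$.
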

\section{Some lemmas}
   To obtain the main result, some lemmas are needed.
\begin{lemma}{\rm(\cite{stein1993harmonic})}\label{lemma-1}
    Let $\psi$ be a  smooth function supported in the unit ball. For a real-valued function $\phi$ which satisfies
    \begin{equation*}
        |\partial_{x}^\alpha \phi|\geq 1
    \end{equation*}
    throughout the support of $\psi$  for some multi-index $\alpha$ with $|\alpha|>0$, there holds the next estimate
    \begin{equation*}
    \left|\int_{\mathbb{R}^n}e^{i\lambda \phi(x)}\psi(x)dx\right|\lesssim \lambda^{-1/k}\l(\|\psi\|_{L^\infty}+\|\nabla \psi\|_{L^1}\r)
    \end{equation*}
    where $k=|\alpha|$.
\end{lemma}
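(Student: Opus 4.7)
The plan is to reduce to the classical one-variable van der Corput estimate and then lift to $\mathbb{R}^n$ via Fubini combined with a single integration by parts that inserts the amplitude $\psi$. The essential content is the scalar bound
\[
\left|\int_a^b e^{i\lambda\phi(x)}\,dx\right| \leq c_k\,\lambda^{-1/k}
\]
on any interval $(a,b) \subset \mathbb{R}$ whenever $|\phi^{(k)}| \geq 1$ throughout (with the extra hypothesis that $\phi'$ is monotonic when $k=1$), where $c_k$ depends only on $k$ and not on the length of the interval.

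I would prove this scalar bound by induction on $k$. For $k=1$, write $e^{i\lambda\phi} = (i\lambda\phi')^{-1}\tfrac{d}{dx}e^{i\lambda\phi}$ and integrate by parts; because $1/\phi'$ is monotonic, the total variation of $1/(\lambda\phi')$ is $\lesssim 1/\lambda$, and together with boundary contributions of the same size this produces the $\lambda^{-1}$ bound. For the step $k-1 \Rightarrow k$, note that $\phi^{(k-1)}$ is strictly monotone and so vanishes at most once, at some $c \in [a,b]$; excising $(c-\delta,c+\delta)$ costs at most $2\delta$, while on the complement the mean value theorem yields $|\phi^{(k-1)}| \geq \delta$, and applying the inductive hypothesis to the rescaled phase $\phi/\delta$ with parameter $\lambda\delta$ bounds the remainder by $c_{k-1}(\lambda\delta)^{-1/(k-1)}$. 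Balancing $\delta \sim \lambda^{-1/k}$ yields the claimed $\lambda^{-1/k}$ rate.

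To introduce the amplitude, set $F(x) := \int_a^{x} e^{i\lambda\phi(t)}\,dt$ along a line through $\supp\psi$; the scalar bound, applied on each subinterval on which $|\phi^{(k)}|\geq 1$ holds, gives $|F(x)| \lesssim \lambda^{-1/k}$ uniformly. A single integration by parts then yields
\[
\int e^{i\lambda\phi(x)}\psi(x)\,dx = F(x)\psi(x)\Big|_{a}^{b} - \int F(x)\psi'(x)\,dx,
\]
whose modulus is bounded by $\lambda^{-1/k}\bigl(\|\psi\|_\infty + \|\psi'\|_{L^1}\bigr)$, the $\|\psi\|_\infty$ absorbing any boundary contribution at the edge of the region where $|\phi^{(k)}|\geq 1$. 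For the full multi-dimensional statement, I would single out a coordinate $x_j$ on which $\alpha$ places its derivatives (the pure case $\alpha = k e_j$ is what occurs in the paper's applications, and after relabelling this is what we need); Fubini converts the one-variable bound on each transverse $x_j$-slice of $\supp\psi$, which lies in the unit ball and hence has bounded diameter, into the stated $\lambda^{-1/k}\bigl(\|\psi\|_\infty + \|\nabla\psi\|_{L^1}\bigr)$ estimate after integrating out the remaining variables.

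The main obstacle is the inductive step in the scalar bound: correctly balancing the wasted neighborhood of size $2\delta$ around the zero of $\phi^{(k-1)}$ against the rescaled inductive bound on the complement is exactly what produces the characteristic $\lambda^{-1/k}$ decay, and managing this balance uniformly across $k$ is the crux of the argument.
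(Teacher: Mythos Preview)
The paper does not prove this lemma; it is quoted from Stein's book as a known result, with only the remark that it ``corresponds to the higher-dimensional van der Corput Lemma.'' There is therefore no paper proof to compare against. Your sketch is the standard argument (essentially the one in Stein): one-variable van der Corput by induction on $k$, insertion of the amplitude via a single integration by parts against $F(x)=\int_a^x e^{i\lambda\phi}$, and Fubini to pass to $\mathbb{R}^n$. This is correct for the pure-derivative case $\alpha = k e_j$, which, as you observe, is all the paper ever invokes (each application in the proof of Lemma~\ref{lemma-5} takes $\partial^{\beta_{k'}}_{y_{k'}}$ in a single coordinate direction). The fully general mixed multi-index statement would need an additional reduction step, but that refinement is not required for anything in the paper.
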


    This lemma corresponds to the higher-dimensional van der Corput Lemma.
\begin{lemma}\label{lemma-2}
    Suppose that $\mathcal{K}_a^b$ is an operator defined by
    \begin{equation*}
    \mathcal{K}_a^b f(x)=\int_{\mathbb{R}^n}\big|\|x\|_a^a-\|y\|_a^a\big|^{-\frac{n}{b}}f(y)dy,
    \end{equation*}
    where $n<a\leq b$ and $$\|x\|_a=\l(\sum_{i=1}^n|x_i|^a\r)^\frac{1}{a}.$$
   Then the next two statements hold:
\begin{itemize}
    \item[(i)] $\mathcal{K}_a^b$ is of weak type $(1,b/a)$.
    \item[(ii)] $\mathcal{K}_a^b$ maps $L^p$ to $L^q$ whenever $\frac{1}{p}=\frac{1}{q}+\frac{b-a}{n}$ and $1<p<\frac{b}{b-a}$.
\end{itemize}
\end{lemma}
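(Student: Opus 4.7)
My plan is to prove (i) by showing that the kernel $K(x,y)=|\|x\|_a^a-\|y\|_a^a|^{-n/b}$ lies in weak $L^{b/a}$ uniformly in each variable and then running the classical truncation argument, and to deduce (ii) by real interpolation between this weak endpoint and its dual.

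For (i), the key step is the uniform $\ell^a$-annulus bound
\[
\bigl|\{y\in\mathbb{R}^n:|\|x\|_a^a-\|y\|_a^a|<\epsilon\}\bigr|\;\lesssim\;\epsilon^{n/a}.
\]
I would establish this by passing to $\ell^a$-polar coordinates $y=\rho\omega$ ($\rho=\|y\|_a$, $\omega$ on the $\ell^a$-unit sphere $\Sigma$) and substituting $u=\rho^a$, so that the annulus measure becomes $(|\Sigma|/a)\int_{|u-\|x\|_a^a|<\epsilon}u^{n/a-1}\,du$. Because $n<a$ we have $n/a<1$, so the subadditivity of $u\mapsto u^{n/a}$ bounds this integral by a constant times $\epsilon^{n/a}$, uniformly in $x$. (The naive local estimate $\epsilon\cdot\|x\|_a^{n-a}$ blows up as $\|x\|_a\to 0$, so uniformity requires using the global containment of the annulus in the $\ell^a$-ball $\{u\le\|x\|_a^a+\epsilon\}$.) Setting $\epsilon=t^{-b/n}$ yields $\|K(x,\cdot)\|_{L^{b/a,\infty}}\lesssim 1$ uniformly in $x$, and by the symmetry $K(x,y)=K(y,x)$ the same bound holds for $K(\cdot,y)$. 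The weak-type $(1,b/a)$ estimate then follows from the standard truncation: given $f\in L^1$ and $\lambda>0$, split $K$ at level $\mu=\lambda/\|f\|_{L^1}$; the low piece is bounded in $L^\infty$ by $\mu\|f\|_{L^1}=\lambda$, while the high piece is handled by Chebyshev together with the layer-cake estimate $\int K(x,y)\mathbf{1}_{\{K(x,y)>\mu\}}\,dx\lesssim\mu^{1-b/a}$ (which itself follows directly from the uniform weak-$L^{b/a}$ bound by integrating $d(t)\lesssim t^{-b/a}$ from $\mu$ to $\infty$).

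For (ii), note that the kernel is symmetric in its two arguments, so $\mathcal{K}_a^b$ is self-adjoint. Applying the Lorentz--H\"older inequality pointwise,
\[
|\mathcal{K}_a^b f(x)|\;\le\;\|K(x,\cdot)\|_{L^{b/a,\infty}}\|f\|_{L^{(b/a)',1}}\;\lesssim\;\|f\|_{L^{b/(b-a),1}},
\]
one obtains the complementary strong-type endpoint $\mathcal{K}_a^b:L^{b/(b-a),1}\to L^\infty$. Real (Marcinkiewicz) interpolation between the endpoints $(L^1,L^{b/a,\infty})$ and $(L^{b/(b-a),1},L^\infty)$ then produces strong boundedness $\mathcal{K}_a^b:L^p\to L^q$ throughout the open range $1<p<b/(b-a)$, with the line relating $p$ and $q$ read off from the endpoint data. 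As a backup, if the stated exponent relation is not recovered by this two-point interpolation, one can instead reduce to a one-dimensional weighted problem by writing $f$ in $\ell^a$-polar coordinates and bounding the spherical average via Minkowski, which converts the problem to a Stein--Weiss weighted fractional-integral inequality in the radial variable $u=\|y\|_a^a$ against the power weight $u^{n/a-1}$.

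The principal obstacle is the uniform distribution bound in the first step: one must use the global $\ell^a$-ball containment rather than the local annulus estimate in order to keep the constant uniform in $x$ down to the degenerate point $\|x\|_a=0$, and the restriction $n<a$ enters precisely here as the condition $n/a<1$ that makes $u\mapsto u^{n/a}$ subadditive. Once that uniform weak-$L^{b/a}$ bound for the kernel is secured, both (i) and (ii) reduce to routine real-variable arguments.
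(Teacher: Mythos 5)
The paper does not actually prove Lemma \ref{lemma-2}; it only cites \cite{shi2016sharp} and says the lemma was ``essentially established'' there. So there is no in-paper proof to compare against, and your argument has to stand on its own. It largely does, and in fact it clears up a genuine inconsistency in the lemma's statement.

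Your kernel bound is correct: passing to $\ell^a$-polar coordinates, substituting $u=\rho^a$, and using the subadditivity of $u\mapsto u^{n/a}$ (which needs $n<a$) gives the uniform annulus estimate $|\{y:\ |\|x\|_a^a-\|y\|_a^a|<\epsilon\}|\lesssim\epsilon^{n/a}$, hence $\|K(x,\cdot)\|_{L^{b/a,\infty}}\lesssim 1$ and by symmetry the same in $x$. The truncation argument for (i) is then standard and correct, with one caveat noted below. For (ii), the Lorentz--H\"older step gives $\mathcal{K}_a^b:L^{b/(b-a),1}\to L^\infty$, and real interpolation against the weak $(1,b/a)$ endpoint gives $L^p\to L^q$ on the segment $1<p<b/(b-a)$, with the relation $1/p-1/q=(b-a)/b$ (both endpoints $(1,a/b)$ and $((b-a)/b,0)$ in $(1/p,1/q)$ coordinates lie on that line). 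You flagged a possible mismatch with the printed relation $1/p=1/q+(b-a)/n$ and proposed a weighted Stein--Weiss backup, but no backup is needed: the printed relation is a typo. It is internally inconsistent with (i) (weak $(1,b/a)$ sits on $1/p-1/q=(b-a)/b$, not $(b-a)/n$, since $b>n$) and with the stated range $1<p<b/(b-a)$ (whose right endpoint corresponds to $q=\infty$ on the $(b-a)/b$ line, not on the $(b-a)/n$ line). It is also how the lemma is actually used in Section~3: there $a=m$, $b=n\gamma$, and the claimed duality pair $p=\frac{2n\gamma}{2n\gamma-m}$, $q=p'$, satisfies $1/p-1/p'=\frac{n\gamma-m}{n\gamma}=(b-a)/b$, not $(b-a)/n$. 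So your interpolation gives exactly the exponent relation the paper needs.

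One genuine gap: your weak-type argument uses $\int_\mu^\infty t^{-b/a}\,dt\lesssim\mu^{1-b/a}$, which requires $b/a>1$, i.e.\ $a<b$. The lemma allows $a\le b$, and when $a=b$ the layer-cake integral diverges, so the truncation argument as written does not yield weak $(1,1)$. If you want to cover the endpoint $a=b$ you need a separate argument (or to note, as is plausible, that the $a=b$ case of (i) is simply not used and may not hold). This is a corner case, but worth stating explicitly rather than sweeping under the ``$n<a\le b$'' hypothesis.
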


   Lemma \ref{lemma-2} was essentially established in the reference \cite{shi2016sharp}. Before the further discussion, we introduce an important concept: \emph{critical point}. If a point $x_0$ satisfies $\nabla\phi(x_0)=0$, then $x_0$ is a \emph{critical point} of $\phi$.
\begin{lemma}{\rm(\cite{stein1993harmonic})}\label{lemma-3}
    Suppose $\psi$ is smooth, has compact support, and $\phi$ is a smooth real-valued function that has no critical points in the support of $\psi$. Then
    \begin{equation*}
    I(\lambda)=\int_{\mathbb{R}^n}e^{i\lambda \phi(x)}\psi(x) dx=O\l(\lambda^{-N}\r),
    \end{equation*}
    as $\lambda\rightarrow\infty$, for every $N\geq 0$.
\end{lemma}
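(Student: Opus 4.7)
The plan is to apply the standard non-stationary phase technique: build a first-order differential operator $L$ which reproduces $e^{i\lambda\phi}$ under its action, then integrate by parts $N$ times to gain $N$ factors of $\lambda^{-1}$.

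First, I would use the compactness of $\supp\psi$ together with the assumption that $\phi$ has no critical points on $\supp\psi$ to extract a uniform lower bound $|\nabla\phi(x)|\geq c>0$ for all $x\in\supp\psi$. By multiplying $1/|\nabla\phi|^2$ by a smooth cutoff equal to $1$ on $\supp\psi$ and supported in a slightly larger neighborhood on which $|\nabla\phi|\geq c/2$, I may assume that the coefficient $a(x):=\nabla\phi(x)/|\nabla\phi(x)|^2$ is a globally defined $C^\infty$ vector field on $\mathbb{R}^n$. This step is harmless because the cutoff is identically $1$ on $\supp\psi$, which is the only region that contributes to $I(\lambda)$.

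Next, I would define
\[
Lf \;=\; \frac{1}{i\lambda}\,a(x)\cdot\nabla f,
\]
so that $L(e^{i\lambda\phi})=e^{i\lambda\phi}$ by a direct computation using $\nabla e^{i\lambda\phi}=i\lambda(\nabla\phi)e^{i\lambda\phi}$. The formal transpose is
\[
L^{*}g \;=\; -\frac{1}{i\lambda}\,\nabla\cdot\bigl(a(x)\,g\bigr),
\]
and because $\psi$ has compact support there are no boundary terms in the integration by parts. Thus
\[
I(\lambda) \;=\; \int_{\mathbb{R}^n}\!\bigl(Le^{i\lambda\phi}\bigr)\psi\,dx \;=\; \int_{\mathbb{R}^n}\! e^{i\lambda\phi}\,L^{*}\psi\,dx.
\]
Iterating this identity $N$ times yields
\[
I(\lambda) \;=\; \int_{\mathbb{R}^n}\! e^{i\lambda\phi(x)}\,(L^{*})^{N}\psi(x)\,dx,
\]
where each application of $L^{*}$ produces a factor of $\lambda^{-1}$ and preserves smoothness and compact support (the coefficient $a$ is smooth, and differentiating a compactly supported function does not enlarge its support).

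The desired bound then follows at once:
\[
\bigl|I(\lambda)\bigr| \;\leq\; \lambda^{-N}\,\bigl\|(L^{*})^{N}\psi\bigr\|_{L^{1}} \;\leq\; C_{N,\phi,\psi}\,\lambda^{-N},
\]
with the constant $C_{N,\phi,\psi}$ depending only on finitely many derivatives of $\phi$ and $\psi$ on $\supp\psi$ and on the lower bound $c$, but not on $\lambda$. There is no genuine obstacle; the only mildly technical point is the global smooth extension of $1/|\nabla\phi|^2$, which is handled by the cutoff described above. Since $N\geq 0$ is arbitrary, this is the claimed $O(\lambda^{-N})$ decay.
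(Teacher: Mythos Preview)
Your proposal is correct. The paper does not supply its own proof of this lemma; it is quoted from Stein's \textit{Harmonic Analysis}, and the non-stationary phase argument you give is essentially the one found there. One minor bookkeeping remark: since $L^{*}$ as you defined it already carries the factor $(i\lambda)^{-1}$, the quantity $\|(L^{*})^{N}\psi\|_{L^{1}}$ is itself of order $C_{N}\lambda^{-N}$, so the extra $\lambda^{-N}$ in your displayed bound is redundant---the conclusion is of course unaffected.
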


   A critical point $x_0$ is said to be \emph{nondegenerate}, if the symmetric $n\times n$ matrix
    \begin{eqnarray*}
    \left[\frac{\partial^2\phi}{\partial x_i\partial x_j}\right](x_0)
    \end{eqnarray*}
    is invertible. In such a case, the next oscillatory integral estimate follows.

\begin{lemma}{\rm(\cite{stein1993harmonic})}\label{lemma-4}
    Suppose that $\phi$ has a nondegenerate critical point at $x_0$ and  $\phi(x_0)=0$. If $\psi$ is supported in a sufficient small neighborhood of $x_0$, then we have
    \begin{equation*}
    \left |\int_{\mathbb{R}^n}e^{i\lambda \phi(x)}\psi dx\right |\lesssim \lambda^{-n/2},
    \end{equation*}
    where $\lambda>0$.
\end{lemma}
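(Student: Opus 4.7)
The plan is to reduce, by a smooth change of variables near $x_0$, to the model case of a purely quadratic phase, and then exploit the explicit Fourier transform of the resulting Gaussian-type oscillatory kernel to extract the $\lambda^{-n/2}$ decay.

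\textbf{Step 1 (Morse reduction).} Since $\phi(x_0)=0$, $\nabla\phi(x_0)=0$, and the Hessian of $\phi$ at $x_0$ is nonsingular, the Morse lemma provides a smooth diffeomorphism $x=\Phi(y)$ from a neighborhood $V$ of the origin onto a neighborhood of $x_0$, with $\Phi(0)=x_0$, such that
\begin{equation*}
\phi(\Phi(y)) \;=\; \tfrac{1}{2}\langle Ay,\,y\rangle \;=\; \sum_{j=1}^n \tfrac{\epsilon_j}{2}\,y_j^2, \qquad \epsilon_j = \pm 1,
\end{equation*}
where the signs $\epsilon_j$ are determined by the signature of $\mathrm{Hess}\,\phi(x_0)$. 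Choosing $\psi$ supported in a sufficiently small neighborhood of $x_0$ so that its support lies inside $\Phi(V)$, we set $\widetilde{\psi}(y):=\psi(\Phi(y))\,|\det D\Phi(y)|$, which is smooth and compactly supported. The change of variables transforms the original integral into
\begin{equation*}
I(\lambda) \;=\; \int_{\mathbb{R}^n} e^{i\lambda Q(y)}\,\widetilde{\psi}(y)\,dy, \qquad Q(y) = \tfrac{1}{2}\langle Ay,y\rangle.
\end{equation*}

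\textbf{Step 2 (the quadratic model).} For the purely quadratic phase, one has the classical distributional Fourier identity
\begin{equation*}
\widehat{e^{i\lambda Q(\cdot)}}(\xi) \;=\; c_A\,\lambda^{-n/2}\,e^{-i\langle A^{-1}\xi,\,\xi\rangle/(2\lambda)},
\end{equation*}
with $c_A$ a constant depending only on $|\det A|$ and on the signature of $A$ (obtained by factoring into one-dimensional Fresnel integrals). Applying Parseval's identity yields
\begin{equation*}
I(\lambda) \;=\; c_A\,\lambda^{-n/2}\int_{\mathbb{R}^n}e^{-i\langle A^{-1}\xi,\,\xi\rangle/(2\lambda)}\,\overline{\widehat{\widetilde{\psi}}(\xi)}\,d\xi.
\end{equation*}
The exponential factor has modulus one, so
\begin{equation*}
|I(\lambda)| \;\leq\; |c_A|\,\lambda^{-n/2}\,\bigl\|\widehat{\widetilde{\psi}}\bigr\|_{L^1(\mathbb{R}^n)}.
\end{equation*}
Since $\widetilde{\psi}\in C_c^\infty$, its Fourier transform decays faster than any polynomial, so $\|\widehat{\widetilde{\psi}}\|_{L^1}$ is finite, yielding $|I(\lambda)|\lesssim \lambda^{-n/2}$ as required.

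\textbf{Main obstacle.} The essential technical input is the Morse lemma, which absorbs all the nonlinearity of $\phi$ near $x_0$ into a smooth change of variables; once this reduction is accomplished, the remainder of the argument is a routine Fourier-analytic computation. An alternative self-contained route is to rescale $y=\lambda^{-1/2}z$ after diagonalizing the Hessian by a linear change of coordinates, handling the residual nonquadratic remainder via an integration-by-parts argument against the gradient of the rescaled phase; this avoids invoking Morse's lemma but requires more delicate book-keeping near the critical point. I would present the Morse-reduction route, since it gives the cleanest path from the hypotheses to the stated $\lambda^{-n/2}$ bound.
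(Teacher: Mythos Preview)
Your argument is correct and is essentially the standard stationary-phase proof: Morse-lemma reduction to a pure quadratic, followed by the explicit Fresnel/Fourier computation and Parseval. Note, however, that the paper does not supply its own proof of this lemma; it is simply quoted from Stein's book \cite{stein1993harmonic}, and the proof there proceeds along exactly the lines you outline. So there is nothing to compare against beyond the cited reference, and your proposal matches it.
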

In one-dimension case, there is a precise asymptotic expansion which is crucial in our optimality argument.
\begin{lemma}{\rm(\cite{stein1993harmonic})}\label{lemma-6}
    Suppose $k\geq 2$, and
    \begin{equation*}
    \phi(x_0)=\phi'(x_0)=\cdots=\phi^{(k-1)}(x_0)=0,
    \end{equation*}
    while $\phi^{(k)}\neq 0$. If $\psi$ is supported in a sufficient small neighborhood of $x_0$, then
    \begin{equation*}
    I(\lambda)=\int e^{i\lambda\phi(x)}\psi(x)dx\sim \lambda^{-1/k}\sum_{j=0}^{\infty}a_j\lambda^{-j/k},
    \end{equation*}
    in the sense that, for all nonnegative integers $N$ and $r$,
    \begin{equation*}
    \l(\frac{d}{d\lambda}\r)^r\l[I(\lambda)-\lambda^{-1/k}\sum_{j=0}^Na_j\lambda^{-j/k}\r]
    =O\l(\lambda^{-r-(N+1)/k}\r) \quad \text{as $\lambda\rightarrow\infty$}.
    \end{equation*}
\end{lemma}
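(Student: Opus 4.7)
\textbf{Proof proposal for Lemma \ref{lemma-6}.}

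The plan is a classical two-step scheme: first put $\phi$ in local normal form $\pm u^{k}$ via a smooth change of variables, then extract the asymptotic expansion of the resulting one-dimensional oscillatory integral by Taylor-expanding the amplitude together with a scaling argument. Since $\phi$ vanishes to order exactly $k$ at $x_{0}$, Taylor's formula with integral remainder gives
\begin{equation*}
\phi(x)=(x-x_{0})^{k}g(x),\qquad g(x_{0})=\phi^{(k)}(x_{0})/k!\neq 0,
\end{equation*}
and, after shrinking the support of $\psi$ so that $g$ has fixed sign $\epsilon=\sgn g(x_{0})$, the assignment $u:=(x-x_{0})|g(x)|^{1/k}$ is a local $C^{\infty}$-diffeomorphism sending $x_{0}$ to $0$ with $\phi=\epsilon u^{k}$. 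After this change of variables, and a complex conjugation combined with $\lambda\mapsto-\lambda$ if $\epsilon=-1$, the task reduces to producing an asymptotic expansion of
\begin{equation*}
J(\lambda)=\int_{\mathbb{R}}e^{i\lambda u^{k}}\tilde{\psi}(u)\,du,\qquad\tilde{\psi}\in C_{c}^{\infty},
\end{equation*}
supported in a small neighborhood of $0$.

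For the second step I localize and Taylor-expand. Fix $\chi\in C_{c}^{\infty}(\mathbb{R})$ with $\chi\equiv 1$ near $0$. Outside the support of $\chi$ the phase $u^{k}$ has no critical point, so by Lemma \ref{lemma-3} the contribution of $(1-\chi)\tilde{\psi}$ is $O(\lambda^{-M})$ for every $M\ge 0$. For the localized piece I write
\begin{equation*}
\chi(u)\tilde{\psi}(u)=\sum_{j=0}^{N}\frac{\tilde{\psi}^{(j)}(0)}{j!}u^{j}\chi(u)+u^{N+1}\rho_{N}(u),\qquad\rho_{N}\in C_{c}^{\infty}.
\end{equation*}
Substituting $v=\lambda^{1/k}u$ in each resulting integral gives
\begin{equation*}
\int e^{i\lambda u^{k}}u^{j}\chi(u)\,du=\lambda^{-(j+1)/k}\int_{\mathbb{R}}e^{iv^{k}}v^{j}\chi(\lambda^{-1/k}v)\,dv.
\end{equation*}
The oscillatory integral $b_{j}:=\int e^{iv^{k}}v^{j}\,dv$ is well defined, because iterated integration by parts against $d(e^{iv^{k}})=ikv^{k-1}e^{iv^{k}}\,dv$ renders it absolutely convergent after finitely many steps, and the difference obtained by replacing the cutoff by $1$, which is supported in $|v|\gtrsim\lambda^{1/k}$, is $O(\lambda^{-M})$ for every $M$ by the same non-stationary integration by parts. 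The remainder term $\int e^{i\lambda u^{k}}u^{N+1}\rho_{N}(u)\,du$ is $O(\lambda^{-(N+2)/k})$ by an identical rescaling, and assembling everything yields
\begin{equation*}
J(\lambda)=\lambda^{-1/k}\sum_{j=0}^{N}a_{j}\lambda^{-j/k}+O(\lambda^{-(N+2)/k}),\qquad a_{j}=b_{j}\tilde{\psi}^{(j)}(0)/j!.
\end{equation*}

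Finally, the estimate on derivatives in $\lambda$ follows by differentiating under the integral sign, which replaces $\tilde{\psi}(u)$ by $(iu^{k})^{r}\tilde{\psi}(u)$; the new amplitude vanishes to order $kr$ at $0$, so the same scaling argument gives an extra factor $\lambda^{-r}$ in every term of the expansion and produces an overall remainder $O(\lambda^{-r-(N+2)/k})$, which is in particular $O(\lambda^{-r-(N+1)/k})$ as stated. The main technical obstacle is the scaled-limit step: to justify both that the constants $b_{j}$ exist as oscillatory integrals and that removing the cutoff $\chi(\lambda^{-1/k}v)$ introduces only rapidly decaying error, one must carry out iterated integration by parts cleanly, exploiting that outside of a shrinking neighborhood of the origin the phase $v^{k}$ has no critical point so that Lemma \ref{lemma-3} applies after undoing the dilation.
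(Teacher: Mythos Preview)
The paper does not supply its own proof of this lemma; it is quoted verbatim from Stein's \emph{Harmonic Analysis} \cite{stein1993harmonic} and used as a black box in the optimality argument of Lemma~\ref{lemma-5}. Your argument is the standard one found in Stein: reduce $\phi$ to the normal form $\pm u^{k}$ by the change of variable $u=(x-x_{0})|g|^{1/k}$, Taylor-expand the new amplitude, and rescale by $v=\lambda^{1/k}u$, handling all tails by non-stationary integration by parts (Lemma~\ref{lemma-3}). The sketch is correct, and the remainder you obtain, $O(\lambda^{-r-(N+2)/k})$, is indeed one step sharper than the bound recorded in the statement; this is consistent with Stein's text.
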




 Before approaching Theorem \ref{thm-1}, we should establish the next lemma.
\begin{lemma}\label{lemma-5}
    Suppose that $\lambda_1, \ldots, \lambda_n$ are $n$ real numbers and   $\lambda=(\lambda_1, \ldots, \lambda_n)$. Let
    \begin{equation*}
    I(\lambda)=\int_{\mathbb{S}^{n-1}}e^{i\l(\lambda_1 y_1^{\beta_1}+\cdots+\lambda_n y_n^{\beta_n}\r)}d\sigma(y),
    \end{equation*}
    where $\mathbb{S}^{n-1}$ is the $n-1$ dimensional sphere and $\beta_j\geq 3$ are integers with $1\leq j\leq n$. Set  $\beta_{\max}=\max\{\beta_1,\cdots, \beta_n\}$.
    Then
    \begin{equation}\label{ine-1}
    \left|I(\lambda)\right|\lesssim |\lambda|^{-\frac{1}{\beta_{\max}}},
    \end{equation}
   furthermore, the estimate is optimal.
\end{lemma}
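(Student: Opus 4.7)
The plan is to prove Lemma \ref{lemma-5} in two parts: the upper bound \eqref{ine-1} by combining a partition of unity on $\mathbb S^{n-1}$ with Lemma \ref{lemma-1}, and the optimality by invoking the one-dimensional asymptotic expansion of Lemma \ref{lemma-6}.

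For the upper bound, I reduce to $|\lambda|\ge 1$ at the outset, the complementary case being trivial since $|I(\lambda)|\le|\mathbb S^{n-1}|$. Select $j_0$ with $|\lambda_{j_0}|=\max_j|\lambda_j|$, so $|\lambda_{j_0}|\gtrsim|\lambda|$. Introduce a smooth partition of unity $\{\chi_k^\pm\}_{k=1}^n$ subordinate to the open cover $\{\pm y_k>c\}$ of $\mathbb S^{n-1}$ and parameterize each patch by the $n-1$ free coordinates $y'=(y_j)_{j\ne k}$ via $y_k=\pm\sqrt{1-|y'|^2}$, producing the pulled-back phase
\[
\Phi(y')=\sum_{j\ne k}\lambda_j y_j^{\beta_j}+\lambda_k(1-|y'|^2)^{\beta_k/2}
\]
with smooth Jacobian on the patch. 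When $k\ne j_0$, $y_{j_0}$ is a free coordinate, and differentiating $\beta_{j_0}$ times in $y_{j_0}$ gives $\partial_{y_{j_0}}^{\beta_{j_0}}\Phi=\beta_{j_0}!\,\lambda_{j_0}+\lambda_k R(y')$ with $R$ smooth on the patch. Refining the partition and dyadically decomposing in $y_{j_0}$ to isolate the cancellation locus of the two summands arranges the uniform lower bound $|\partial_{y_{j_0}}^{\beta_{j_0}}\Phi|\gtrsim|\lambda|$; Lemma \ref{lemma-1} then yields a contribution $\lesssim|\lambda|^{-1/\beta_{j_0}}\le|\lambda|^{-1/\beta_{\max}}$. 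The main technical obstacle is the remaining patch $k=j_0$, in which $y_{j_0}$ is no longer a free coordinate; I would dispatch this case by slicing $\mathbb S^{n-1}$ along $y_{j_0}=t$ and reducing to a one-dimensional oscillatory integral in $t$ whose phase inherits the order-$\beta_{j_0}$ vanishing from the $\lambda_{j_0}t^{\beta_{j_0}}$ term, then apply the one-dimensional case of Lemma \ref{lemma-1}.

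For the optimality, set $\lambda=(0,\ldots,0,t,0,\ldots,0)$ with $t$ in position $j_*$ where $\beta_{j_*}=\beta_{\max}$. Slicing $\mathbb S^{n-1}$ along $y_{j_*}=s$ collapses the integral to
\[
I(\lambda)=|\mathbb S^{n-2}|\int_{-1}^{1}e^{i t s^{\beta_{j_*}}}(1-s^2)^{(n-3)/2}\,ds.
\]
A smooth cutoff $\psi$ supported in a small neighborhood of $s=0$ splits this into two pieces. Off the support of $\psi$, the derivative $(t s^{\beta_{j_*}})'=\beta_{j_*}t s^{\beta_{j_*}-1}$ stays away from zero, so Lemma \ref{lemma-3} bounds that piece by $O(|t|^{-N})$ for every $N$. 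On the cutoff piece, Lemma \ref{lemma-6} applied with $\phi(s)=s^{\beta_{j_*}}$, which vanishes to exact order $\beta_{j_*}$ at $s=0$, gives the asymptotic $a_0|t|^{-1/\beta_{j_*}}+O(|t|^{-2/\beta_{j_*}})$ with $a_0\ne 0$ because $\psi(s)(1-s^2)^{(n-3)/2}$ is nonzero at $s=0$. Therefore $|I(\lambda)|\sim|t|^{-1/\beta_{j_*}}=|\lambda|^{-1/\beta_{\max}}$ as $|t|\to\infty$, matching \eqref{ine-1} and proving optimality.
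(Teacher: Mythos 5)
Your optimality argument is essentially the paper's: both reduce to a one-dimensional integral by slicing along the $\beta_{\max}$-direction and invoke Lemma~\ref{lemma-6}. (The paper first isolates the polar caps with its partition $\psi_n$ so the resulting 1D amplitude $\Psi_n$ is smooth \emph{and} compactly supported in $(-1,1)$; your $(1-s^2)^{(n-3)/2}$ is not smooth at $s=\pm1$ for general $n$, so Lemma~\ref{lemma-3} does not literally apply off your cutoff. This is fixable, but as written it is a small gap.)

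The upper bound is where your argument breaks down, and the break is in the very place the paper does something genuinely different. For a patch $k\ne j_0$ you compute $\partial_{y_{j_0}}^{\beta_{j_0}}\Phi=\beta_{j_0}!\,\lambda_{j_0}+\lambda_k R(y')$ and assert that ``refining the partition and dyadically decomposing in $y_{j_0}$ to isolate the cancellation locus'' yields the uniform lower bound $|\partial_{y_{j_0}}^{\beta_{j_0}}\Phi|\gtrsim|\lambda|$. This cannot be made to work: when $|\lambda_k|\sim|\lambda|\sim|\lambda_{j_0}|$, the zero set of $\beta_{j_0}!\,\lambda_{j_0}+\lambda_k R(y')$ is a hypersurface in $y'$-space that is not contained in any union of finitely many $y_{j_0}$-slices, and in a neighborhood of that hypersurface the derivative is genuinely small, so no refinement restores a uniform lower bound. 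Even a crude split --- a tube of width $\varepsilon$ around the locus estimated trivially by its measure, and van der Corput outside where the derivative is $\gtrsim\varepsilon|\lambda|$ --- optimizes to $|\lambda|^{-1/(\beta_{j_0}+1)}$, which is \emph{worse} than the target $|\lambda|^{-1/\beta_{\max}}$ when $\beta_{j_0}=\beta_{\max}$. The paper avoids this entirely by the dichotomy on the size of $\xi_k=\lambda_k/|\lambda|$: when $|\xi_k|\ll1$ the $\lambda_k R$ term is negligible and Lemma~\ref{lemma-1} applies directly; when $|\xi_k|\approx1$ it verifies (via the identity $\beta_j\xi_j y_j^{\beta_j-2}=\beta_k\xi_k y_k^{\beta_k-2}$ at a critical point) that the Hessian is $\xi_k\beta_k(\beta_k-2)y_k^{\beta_k-2}(\Lambda+Y^TY)$ with $\Lambda+Y^TY$ positive definite, hence nondegenerate, and invokes stationary phase (Lemma~\ref{lemma-4}) to get the much better $|\lambda|^{-(n-1)/2}$. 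That stationary-phase step is the missing idea in your proof.

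Your treatment of the patch $k=j_0$ is also problematic. Slicing $\mathbb S^{n-1}$ along $y_{j_0}=t$ gives $I=\int_{-1}^1 e^{i\lambda_{j_0}t^{\beta_{j_0}}}J(t)\,dt$ where $J(t)$ is the integral over the slice of $e^{i\sum_{j\ne j_0}\lambda_j(1-t^2)^{\beta_j/2}\omega_j^{\beta_j}}$. If any other $\lambda_j$ is comparable to $|\lambda|$, then $J$ oscillates in $t$ with $\|J'\|_{L^1}\sim|\lambda|$, so folding $J$ into the amplitude destroys the van der Corput bound (Lemma~\ref{lemma-1} costs a factor of the amplitude's total variation). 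You cannot treat the phase as simply $\lambda_{j_0}t^{\beta_{j_0}}$. The paper sidesteps this because it never singles out a privileged coordinate $j_0$: on each patch $U_k$ it classifies the critical points of the full $(n-1)$-variable pulled-back phase and runs the $|\xi_k|\ll1$ / $|\xi_k|\approx1$ dichotomy uniformly.
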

\begin{proof}
Let
$$\phi_\lambda(y)=\lambda_1y_1^{\beta_1}+\cdots+\lambda_ny_n^{\beta_n}.$$
 Choose $\psi_k$ such that
 \begin{itemize}
 \item[(1)] $\psi_k\in C^\infty(\mathbb{S}^{n-1}),$
 \item[(2)] $\supp(\psi_k)\subset U_k:=\l\{y\in \mathbb{S}^{n-1}:|y_k|\geq C_n=\frac{1}{\sqrt{n}}\r\},$\label{property-2}
     \item[(3)] $\sum_{k=1}^n\psi_k(x)\equiv 1,$
 \end{itemize}
for $1\leq k \leq n$ and $x\in \bS^{n-1}$.
Hence $\{\psi_k\}$ forms a unity partition of $\mathbb{S}^{n-1}$.
Write
\begin{align*}
I(\lambda)=\sum_{k=1}^nI_k(\lambda)&:=\sum_{k=1}^n\int_{\mathbb{S}^{n-1}}e^{i\phi_\lambda(y)}\psi_k(y)d\sigma(y)
\\
&=\sum_{k=1}^n\int_{\mathbb{S}^{n-1}}e^{i|\lambda|\phi_\xi(y)}\psi_k(y)d\sigma(y),
\end{align*}
where
$$\xi=(\xi_1,\cdots,\xi_n)=\frac{\lambda}{|\lambda|}$$
and
$$ \phi_\xi=\xi_1y_1^{\beta_1}
+\cdots+\xi_ny_n^{\beta_n}.$$

In order to prove that $I(\lambda)$ satisfies the inequality (\ref{ine-1}), it suffices to prove the same claim for every $I_k(\lambda)(1\leq k\leq n)$. Without loss of generality, we analyze $I_n(\lambda)$. A direct computation yields
\begin{align*}
I_n(\lambda)=&\int_{\mathbb{R}^{n-1}}e^{i|\lambda|\l(\xi_1 y_1^{\beta_1}+\cdots+\xi_n \l(\sqrt{1-|y'|^2} \r)^{\beta_n}\r)}\widetilde{\psi_n}(y')dy'\\
&+\int_{\mathbb{R}^{n-1}}e^{i|\lambda|\l(\xi_1 y_1^{\beta_1}+\cdots+(-1)^{\beta_n}\xi_n \l(\sqrt{1-|y'|^2} \r)^{\beta_n}\r)}
\widetilde{\psi_n}(y')dy'\\
:=&I_n^1(\lambda)+I_n^2(\lambda).
\end{align*}
  On account of the similarity between $I_n^1(\lambda)$ and $I_n^2(\lambda)$, Lemma \ref{lemma-5} is simplified to prove that $I_n^1(\lambda)$ satisfies the inequality (\ref{ine-1}).
Given $y=(y',y_n)$, then
\begin{equation*}
\nabla_{y'} \phi_{\xi}=\l(\frac{\partial \phi_{\xi}}{\partial y_1}, \cdots, \frac{\partial \phi_{\xi}}{\partial y_k}, \cdots, \frac{\partial \phi_{\xi}}{\partial y_{n-1}}\r),
\end{equation*}
where
\[\frac{\partial \phi_{\xi}}{\partial y_k}=\beta_k \xi_k y_k^{\beta_k-1}-\beta_n\xi_n y_n^{\beta_n-2}y_k.\]

 The equality $\nabla_{y'} \phi_{\xi}={\bf 0}$ brings us three kinds of critical points.\\
 \begin{itemize}
\item[(I)] $y'={\bf 0};$
\item[(II)] $\text{Each entry in } y' \text{ is nonzero};$
\item[(III)]  Else.
\end{itemize}

If all the critical points are outside the support of $\widetilde{\psi}_n$, we can derive the inequality \eqref{ine-1} from Lemma \ref{lemma-3}. So we always suppose that the critical points are contained in the support of $\widetilde{\psi}_n$.\\

We first take the critical point $y'=\bf{0}$ (naturally $y_n=1$) for example.
For this critical point, we split the argument into two cases.\\

Case I:\quad $|\xi_n|\ll 1$.\\

We denote $\xi_{k'}=\{\xi_k:|\xi_k|\geq |\xi_l|,1\leq l \leq n-1\}$. Obviously, $\l|\xi_{k'}\r|\approx 1$, then $|\xi_n|\ll |\xi_{k'}|$. Although $\xi_{k'}$ may be not unique, it does not affect our estimate. Since $|\xi_n|\ll 1$,
\begin{equation*}
\l|\frac{\partial^{\beta_{k'}}\phi_\xi}{\partial y_{k'}^{\beta_{k'}}}(0)\r|\approx\beta_{k'}!\cdot|\xi_{k'}|> |\xi_{k'}|\approx 1
\end{equation*}
throughout the support of $\widetilde{\psi}_n$, we can conclude
\begin{equation*}
|I_n^1(\lambda)|\lesssim |\lambda|^{-\frac{1}{\beta_{k'}}}\leq |\lambda|^{-\frac{1}{\beta_{\max}}}.
\end{equation*}
by employing Lemma \ref{lemma-1}\\

Case II:\quad $|\xi_n|\approx 1$.\\

We will show that in this case the critical point $y'={\bf 0}$ is nondegenerate. At the critical point, it follows
\begin{equation*}
\begin{cases}
\frac{\partial^2 \phi_\xi}{\partial y_k^2}=\beta_k(\beta_k-1)\xi_k y_k^{\beta_k-2}-\beta_n\xi_ny_n^{\beta_n-2}+\beta_n(\beta_n-2)\xi_n
y_n^{\beta_n-4}y_k^2=-\beta_n\xi_n,\\
\frac{\partial^2 \phi_\xi}{\partial y_k\partial y_l}=\beta_n(\beta_n-2)\xi_ny_n^{\beta_n-4}y_ky_l=0, \quad k\neq l.
\end{cases}
\end{equation*}
Then the matrix
\begin{eqnarray*}
\left[\frac{\partial^2\phi_\xi}{\partial y_i\partial y_j}\right](0)=\left(
  \begin{array}{ccccc}
    -\beta_n\xi_n & \cdots & 0 &\cdots &0\\
    \vdots & \ddots& & &\vdots\\
    0& &-\beta_n\xi_n & &0\\
    \vdots&  & &\ddots &\vdots \\
    0& \cdots& 0& \cdots&-\beta_n\xi_n
  \end{array}
\right)
\end{eqnarray*}
is invertible obviously. Thus the critical point $y'=0$ is nondegenerate, Lemma \ref{lemma-4} implies
\begin{equation*}
|I_n^1(\lambda)|\lesssim |\lambda|^{-\frac{n-1}{2}}\leq |\lambda|^{-\frac{1}{\beta_{\max}}}.
\end{equation*}\\

We now turn to the second kind of critical points of $\phi_\xi$. Similar to the argument above, we also divide this argument into two cases.\\

Case I:\quad $|\xi_n|\ll 1$.\\

For convenience, we still use the notations we have denoted above. In this case $|\xi_n|\ll 1$ and $C_n\leq y_n<1$, naturally
\begin{equation*}
\l|\frac{\partial^{\beta_{k'}}\phi_\xi}{\partial y_{k'}^{\beta_{k'}}}\r|\approx\beta_{k'}!\cdot|\xi_{k'}|> |\xi_{k'}|\approx 1
\end{equation*}
still holds throughout the support of $\widetilde{\psi}_n$. Lemma \ref{lemma-1} indicates
\begin{equation*}
|I_n^1(\lambda)|\lesssim |\lambda|^{-\frac{1}{\beta_{k'}}}\leq |\lambda|^{-\frac{1}{\beta_{\max}}}.
\end{equation*}

Case II:\quad $|\xi_n|\approx 1$.\\

Similarly, We shall show that in this case the critical point $y'$ is nondegenerate. At the critical point, we have
\begin{equation*}
\begin{cases}
\frac{\partial^2 \phi_\xi}{\partial y_k^2}=\beta_k(\beta_k-1)\xi_k y_k^{\beta_k-2}-\beta_n\xi_ny_n^{\beta_n-2}+\beta_n(\beta_n-2)\xi_n
y_n^{\beta_n-4}y_k^2,\\
\frac{\partial^2 \phi_\xi}{\partial y_k\partial y_l}=\beta_n(\beta_n-2)\xi_ny_n^{\beta_n-4}y_ky_l, \quad k\neq l.
\end{cases}
\end{equation*}
Provided that the critical point satisfies $\beta_k \xi_k y_k^{\beta_k-1}-\beta_n\xi_n y_n^{\beta_n-2}y_k=0$ and $y_k\neq 0$, we have $\beta_k \xi_k y_k^{\beta_k-2}=\beta_n\xi_n y_n^{\beta_n-2}$. Based on this, it is easy to verify
\begin{align*}
\frac{\partial^2 \phi_\xi}{\partial y_k^2}&=\xi_n\beta_n(\beta_k-1)y_n^{\beta_n-2}+\xi_n\beta_n
(\beta_n-2)y_n^{\beta_n-4}y_k^2-\xi_n\beta_ny_n^{\beta_n-2}\\
&=\xi_n\beta_n(\beta_k-2)y_n^{\beta_n-2}+\xi_n\beta_n
(\beta_n-2)y_n^{\beta_n-4}y_k^2\\
&=\xi_n\beta_ny_n^{\beta_n-2}\l[(\beta_k-2)+(\beta_n-2)
\l(\frac{y_k}{y_n}\r)^2\r]\\
&=\xi_n\beta_n(\beta_n-2)y_n^{\beta_n-2}\l[\frac{\beta_k-2}{\beta_n-2}+
\l(\frac{y_k}{y_n}\r)^2\r]
\end{align*}
and
\begin{equation*}
\frac{\partial^2 \phi_\xi}{\partial y_k\partial y_l}=\xi_n\beta_n(\beta_n-2)y_n^{\beta_n-2}\l(\frac{y_k}{y_n}\r)\l(\frac{y_l}{y_n}\r).
\end{equation*}
Then the matrix $\left[\frac{\partial^2\psi_\xi}{\partial y_i\partial y_j}\right](y')$ is
\begin{align*}
\xi_n\beta_n(\beta_n-2)y_n^{\beta_n-2}\left(
  \begin{array}{ccccc}
    \l[\frac{\beta_1-2}{\beta_n-2}+
\l(\frac{y_1}{y_n}\r)^2\r] & \cdots & \l(\frac{y_k}{y_n}\r)\l(\frac{y_1}{y_n}\r) &\cdots &\l(\frac{y_{n-1}}{y_n}\r)\l(\frac{y_1}{y_n}\r)\\
    \vdots & \ddots& & \ddots&\vdots\\
   \l(\frac{y_1}{y_n}\r)\l(\frac{y_k}{y_n}\r)& &\l[\frac{\beta_k-2}{\beta_n-2}+
\l(\frac{y_k}{y_n}\r)^2\r]& &\l(\frac{y_{n-1}}{y_n}\r)\l(\frac{y_k}{y_n}\r)\\
    \vdots& \ddots & &\ddots &\vdots \\
    \l(\frac{y_1}{y_n}\r)\l(\frac{y_{n-1}}{y_n}\r)& \cdots& \l(\frac{y_k}{y_n}\r)\l(\frac{y_{n-1}}{y_n}\r)& \cdots&\l[\frac{\beta_{n-1}-2}{\beta_n-2}+
\l(\frac{y_{n-1}}{y_n}\r)^2\r]
  \end{array}
\right).
\end{align*}
We shall show that the matrix is invertible. Decompose this matrix into two parts:
\begin{equation*}
\left[\frac{\partial^2\psi_\xi}{\partial y_i\partial y_j}\right](y')=\xi_n\beta_n(\beta_n-2)y_n^{\beta_n-2}\l(\Lambda+Y^T\cdot Y\r)
\end{equation*}
where
\begin{equation*}
\Lambda=\left(
  \begin{array}{ccccc}
    \frac{\beta_1-2}{\beta_n-2} & \cdots & 0 &\cdots &0\\
    \vdots & \ddots& & &\vdots\\
    0& &\frac{\beta_k-2}{\beta_n-2} & &0\\
    \vdots&  & &\ddots &\vdots \\
    0& \cdots& 0& \cdots&\frac{\beta_{n-1}-2}{\beta_n-2}
  \end{array}
\right),
\end{equation*}
As $\Lambda+Y^T\cdot Y$ is positive definite, 
the critical point $y'$ is nondegenerate apparently.
 By a smooth truncation of $\widetilde{\psi}_n$ (supported in such a small neighborhood of $y'$ that the neighborhood contains only one critical point $y'$), Lemma \ref{lemma-3} leads to
\begin{equation*}
\l|I_n^1(\lambda)\r|\lesssim |\lambda|^{-\frac{n-1}{2}}\leq |\lambda|^{-\frac{1}{\beta_{\max}}}.
\end{equation*}

Combining the arguments of the first and the second kind of critical points gives the proof of the third kind of critical points. The verification is easy and we omit it.\\
Up to now, we have confirmed the estimate. For the claim in \ref{lemma-5}, it remains to give the optimality argument.\\
To prove the estimate is optimal, we need only show that in a fixed direction of $\lambda=\l(\lambda_1,\cdots,\lambda_n\r)$ denoted by $\widetilde{\lambda}$, it follows
\begin{equation}\label{equality-1}
\l|I\l(\widetilde{\lambda}\r)\r|=a_0\l|\widetilde{\lambda}\r|^{-1/\beta_{\max}}+
o\left(\l|\widetilde{\lambda}\r|^{-1/\beta_{\max}}\right)\quad \ \text{as $\l|\widetilde{\lambda}\r|\rightarrow\infty$ }.
\end{equation}
where $a_0>0$ is a constant not related to $\widetilde{\lambda}$ and $o$ denote the higher-order term. Without loss of generality, set $\beta_n=\beta_{\max}$, we will show that for $\widetilde{\lambda}=\left(0,\cdots,0,\left|\widetilde{\lambda}\right|\right)
=\left|\widetilde{\lambda}\right|(0,0,\cdots,1)$, the equality (\ref{equality-1}) holds.\\

In this case,
\begin{align*}
I\l(\widetilde{\lambda}\r)&=\int_{\mathbb{S}^{n-1}}e^{i\l|\widetilde{\lambda}\r| y_n^{\beta_n}}d\sigma(y)\\
&=\int_{\mathbb{S}^{n-1}}e^{i\l|\widetilde{\lambda}\r| y_n^{\beta_n}}\psi_n(y)d\sigma(y)+\int_{\mathbb{S}^{n-1}}e^{i\l|\widetilde{\lambda}\r| y_n^{\beta_n}}\l(1-\psi_n(y)\r)d\sigma(y)\\
&=I_n\l(\widetilde{\lambda}\r)+I_n^c\l(\widetilde{\lambda}\r)
\end{align*}
No matter whatever critical points $I_n\l(\widetilde{\lambda}\r)$ have, we can derive
\begin{equation}\label{equality-2}
\l|I_n\l(\widetilde{\lambda}\r)\r|\lesssim \l|\widetilde{\lambda}\r|^{-\frac{n-1}{2}}.
\end{equation}
from Case II of the three kinds of the critical points we have discussed.
Obviously,
\begin{equation}\label{equality-4}
\l|\widetilde{\lambda}\r|^{-\frac{n-1}{2}}=
o\l(\l|\widetilde{\lambda}\r|^{-1/\beta_{n}}\r).
\end{equation}

On the other hand, the change of variable formula yields
\begin{align*}
I_n^c\l(\widetilde{\lambda}\r)&=\int_{-1}^{1}e^{i\l|\widetilde{\lambda}\r| y_n^{\beta_n}}\int_{\sqrt{1-y_n^2}
\mathbb{S}^{n-2}}\frac{\l(1-\psi_n(y_n,\theta)\r)}{\sqrt{1-y_n^2}}d\theta dy_n.
\end{align*}
Define \[\Psi_n(y_n):=\int_{\sqrt{1-y_n^2}
\mathbb{S}^{n-2}}\frac{\l(1-\psi_n(y_n,\theta)\r)}{\sqrt{1-y_n^2}}d\theta.\]
Then
\begin{align*}
I_n^c\l(\widetilde{\lambda}\r)&=\int_{-1}^{1}e^{i\l|\widetilde{\lambda}\r| y_n^{\beta_n}}\Psi_n(y_n) dy_n.
\end{align*}
We claim that $\Psi_n(y_n)$ is a smooth function with compact support. The fact that $\Psi_n(y_n)$ is smooth on the interval $(-1,1)$ is obvious, it suffices to verify $\Psi_n(y_n)$ is supported in $[-1,1]$. Since the equality $\sum_{k=1}^n\psi_k(x)\equiv 1$ and $U'_n=\{y\in \mathbb{S}^{n-1}:|y_n|\geq \sqrt{\frac{n-1}{n}}\}\subset U_n$ satisfies
\begin{equation*}
U'_n\cap U_k=\emptyset, \quad 1\leq k\leq n-1,
\end{equation*}
then $\psi_n(y)\equiv 1$ on $U'_n$. Thus we have
\begin{equation*}
\Psi_n(y_n)\equiv 0 \quad \ \text{when} \quad |y_n|\geq \sqrt{\frac{n-1}{n}}.
\end{equation*}
Clearly, $\Psi_n(y_n)$ is a smooth function with compact support.

By using Lemma \ref{lemma-6} in which $r=0, N=1$, we acquire
\begin{align}\label{equality-3}\notag
I_n^c\l(\widetilde{\lambda}\r)&=a_0\l|\widetilde{\lambda}\r|^{-1/\beta_{n}}+
o\left(\l|\widetilde{\lambda}\r|^{-1/\beta_{n}}\right)\\
&=a_0\l|\widetilde{\lambda}\r|^{-1/\beta_{\max}}+
o\left(\l|\widetilde{\lambda}\r|^{-1/\beta_{\max}}\right).
\end{align}
The value of $a_0$ can be computed according to the proof of Lemma \ref{lemma-6} presented in \cite{stein1993harmonic}, we figure it out that
\begin{align*}
a_0&\approx|
\Psi_n(0)|\\
&=\l|
\int_{\mathbb{S}^{n-2}}\left(1-\psi_n(0,\theta)\right)d\theta\r|.
\end{align*}
Considering the property of $\psi_n$ that $\supp(\psi_n)\subset \{y\in \mathbb{S}^{n-1}:|y_n|\geq C_n=\frac{1}{\sqrt{n}}\}$, we have $\psi_n(0,\theta)=0$ and
\begin{equation*}
a_0\approx
\left|\mathbb{S}^{n-2}\right|
\neq 0.
\end{equation*}
Combine the estimates (\ref{equality-2}), (\ref{equality-4}) and (\ref{equality-3}), we obtain (\ref{equality-1}). The proof is complete.
\end{proof}

  This lemma is an estimate of the decay rate of oscillatory integral, it plays an important role in the proof of Theorem \ref{thm-1}.

\section{Proof of Theorem \ref{thm-1}}

 For any $f\in \mathscr{S}$, the $L^2$ norm of $T_{k,m}(f)$ on $n-1$ dimensional sphere is
\begin{eqnarray*}
    \left\|T_{\alpha,m}(f)\right\|^2_{L^2(\mathbb{S}^{n-1},d\sigma)} &=& \left\langle T_{\alpha,m}(f),T_{\alpha,m}(f)\right\rangle_{\mathbb{S}^{n-1}}\\
    &=&\int_{\mathbb{S}^{n-1}}T_{\alpha,m}(f)\cdot\overline{T_{\alpha,m}(f)}d\sigma\\
    &=&\int_{\mathbb{S}^{n-1}}\int_{\mathbb R^n}e^{i(x_1^{\alpha_1} y_1^m+\cdots+x_n^{\alpha_n} y_n^m)}f(y)dy\overline{\int_{\mathbb R^n}e^{i(x_1^{\alpha_1} z_1^m+\cdots+x_n^{\alpha_n} z_n^m)}f(z)dz}d\sigma\\
    &=&\int_{\mathbb R^n}\int_{\mathbb R^n}\int_{\mathbb{S}^{n-1}}e^{i\left[(x_1^{\alpha_1} (y_1^m-z_1^m)+\cdots+x_n^{\alpha_n}( y_n^m-z_n^m)\right]}d\sigma f(y)\overline{f(z)}dydz\\
    &\leq& \int_{\mathbb R^n}\int_{\mathbb R^n}\left|\int_{\mathbb{S}^{n-1}}e^{i\left[(x_1^{\alpha_1} (y_1^m-z_1^m)+\cdots+x_n^{\alpha_n}( y_n^m-z_n^m)\right]}d\sigma \right| \left |f(y)\overline{f(z)}\right |dydz.
\end{eqnarray*}
    By means of Lemma \ref{lemma-5}, the oscillatory integral in the last line satisfies
\begin{equation*}
    \left|\int_{\mathbb{S}^{n-1}}e^{i\left [(x_1^{\alpha_1} (y_1^m-z_1^m)+\cdots+x_n^{\alpha_n}( y_n^m-z_n^m)\right ]}d\sigma \right |\lesssim \left| \sum\limits_{i=1}^n\left(\left|y_i^m-z_i^m\right|^2\right)^{\frac{1}{2}}\right|^{-\frac{1}{\gamma}}
\end{equation*}
    whenever $\gamma\geq \alpha_{\max}$.
    By the equivalence of the norms in finite dimensional linear normed space $(\text{i.e.} \|\|_1 \approx \|\|_2)$, we obtain
    \begin{equation*}
    \sum\limits_{i=1}^n\left(\left|y_i^m-z_i^m\right|^2\right)^{\frac{1}{2}}\approx
    \sum\limits_{i=1}^n\left|y_i^m-z_i^m\right|
    \end{equation*}
    and the simple inequality
    \begin{equation*}
    \sum\limits_{i=1}^n\left|y_i^m-z_i^m\right|\geq \left|\|y\|_m^m-\|z\|_m^m\right|
    \end{equation*}
    where the notation $\|\cdot\|_m^m$ is same as what we have denoted in Lemma \ref{lemma-2}.\\

    Then
    \begin{equation*}
    \left|\int_{\mathbb{S}^{n-1}}e^{i\left [(x_1^{\alpha_1} (y_1^m-z_1^m)+\cdots+x_n^{\alpha_n}( y_n^m-z_n^m)\right ]}d\sigma \right |\lesssim \left|\|y\|_m^m-\|z\|_m^m\right|^{-\frac{1}{\gamma}}.
    \end{equation*}
    By employing Lemma \ref{lemma-2}, the next inequality holds
    \begin{align*}
    \left\|T_{\alpha,m}(f)\right\|_{L^2(\mathbb{S}^{n-1},d\sigma)} &\lesssim
    \int_{\mathbb R^n}\int_{\mathbb R^n}\left|\|y\|_m^m-\|z\|_m^m\right|^{-\frac{1}{\gamma}} \left |f(y)\overline{f(z)}\right |dydz\\
    &= \int_{\mathbb {R}^n}\mathcal{K}_m^{n\gamma}(|f|)(z)|f(z)|dz.
    \end{align*}

   If the right side of the inequality above is finite, then we get the $L^2$ boundedness on $n-1$ dimensional sphere of the operator $T_{\alpha,m}$. For this purpose, we require that the operator $\mathcal{K}_m^{n\gamma}$ map $L^p$ to $L^{p'}$ where $p'$ is the conjugate exponential of $p$ (i.e. $1/p+1/p'=1$) and $p>1$. Consequently, we have $p=\frac{2n\gamma}{2n\gamma-m}$. Considering $\gamma\geq \alpha_{\max}$, we can extend the index $p=\frac{2n\gamma}{2n\gamma-m}$ to $1< p\leq\frac{2n\alpha_{\max}}{2n\alpha_{\max}-m}$. The case $p=1$ is trivial, we omit its argument. Hence the statements in Theorem \ref{thm-1} are proved.

\section{Characterization of necessary condition}
In this section, we mainly investigate a necessary condition which ensures the inequality (\ref{main-result}) hold. Motivated by the optimality argument of the restriction theorem in \cite{stein1993harmonic} and the Knapp's example stated in \cite{rahmanrestriction}, we give a necessary condition. For other applications of this technique, readers may refer to \cite{yang2004sharp} and \cite{phong1997newton}.\\

\begin{theorem}
If the inequality (\ref{main-result}) holds, the index $p$ must satisfy
\begin{equation*}
1\leq p \leq \frac{2(2+\min\limits_{i}\sum\limits_{j\neq i}^n\alpha_j)}{2(2+\min\limits_{i}\sum\limits_{j\neq i}^n\alpha_j)-(n-1)m}.
\end{equation*}
\end{theorem}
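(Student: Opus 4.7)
The plan is a Knapp-type counterexample: I will exhibit a one-parameter family $f_\rho$ of test functions, depending on a small $\rho>0$, and then use the behaviour of both sides of (\ref{main-result}) as $\rho\to 0^+$ to force the stated bound on $p$. Roughly, the test function will be a modulated indicator of a carefully scaled box, engineered so that $T_{\alpha,m}f$ exhibits constructive interference on a small spherical cap near a pole of $\mathbb{S}^{n-1}$, while its $L^p$ norm is controlled only by the volume of the box.

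I would begin by choosing an index $i^*$ achieving $\min_i\sum_{j\neq i}\alpha_j$ and, by relabeling coordinates, assume $i^*=n$. Set $S:=\sum_{j<n}\alpha_j$, and work on the cap $C_\rho=\{x\in\mathbb{S}^{n-1}:|x_k|<\rho,\ 1\le k\le n-1\}$ around the pole $e_n$; its surface measure is $\asymp\rho^{n-1}$. The key input is that for $x\in C_\rho$ one has $x_n^{\alpha_n}=(1-|x'|^2)^{\alpha_n/2}=1+O(\rho^2)$ by Taylor's theorem, so the $n$-th contribution to the phase is nearly independent of $x$ up to a perturbation of size $O(\rho^2 y_n^m)$. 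The test function I would take is
\[
f(y)=e^{-iy_n^m}\mathbf{1}_R(y),\qquad R=\prod_{k=1}^{n-1}\bigl[-c\rho^{-\alpha_k/m},c\rho^{-\alpha_k/m}\bigr]\times\bigl[-c\rho^{-2/m},c\rho^{-2/m}\bigr],
\]
with $c>0$ a small absolute constant. The modulation $e^{-iy_n^m}$ is designed to absorb the flat piece of $x_n^{\alpha_n}y_n^m\approx y_n^m$, so that on $C_\rho\times R$ the residual phase of the integrand of $T_{\alpha,m}f(x)$ is exactly $\sum_{k<n}x_k^{\alpha_k}y_k^m+(x_n^{\alpha_n}-1)y_n^m$. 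A routine check shows that for $y\in R$ every summand is bounded by $c^m$, so for $c$ small the residual phase stays in, say, $[-\pi/3,\pi/3]$; taking real parts then gives the uniform lower bound $|T_{\alpha,m}f(x)|\gtrsim |R|$ throughout $C_\rho$.

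From this point the scaling is mechanical: $\|T_{\alpha,m}f\|_{L^2(\mathbb{S}^{n-1})}\gtrsim |R|\cdot\rho^{(n-1)/2}$ and $\|f\|_{L^p(\mathbb{R}^n)}=|R|^{1/p}$, with $|R|\asymp \rho^{-(S+2)/m}$. Plugging these into (\ref{main-result}) and letting $\rho\to 0^+$ forces the scaling inequality $(n-1)/2-(S+2)/m\ge -(S+2)/(mp)$, which rearranges to the claimed bound on $p$. The main delicate step, and the one I expect to be the main obstacle, is verifying uniform boundedness of the residual phase on $C_\rho\times R$: one must check that higher-order Taylor corrections of $(1-|x'|^2)^{\alpha_n/2}$ do not swamp the leading quadratic term at the aggressive scale $\delta_n=c\rho^{-2/m}$. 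This reduces to the observation that the $\ell$-th-order correction contributes at most $O(c^m\rho^{2\ell-2})$ on $R$, which is $o(1)$ for $\ell\ge 2$ as $\rho\to 0$, so the construction is consistent and yields the necessary condition.
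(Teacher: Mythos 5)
Your proposal is essentially the same Knapp-type argument the paper uses: the same modulated indicator of a box with side lengths $\sim\rho^{-\alpha_k/m}$ in the transverse directions and $\sim\rho^{-2/m}$ along the distinguished axis, the same choice of distinguished index minimizing $\sum_{j\neq i}\alpha_j$, the same check that each summand of the residual phase is $O(c^m)$, and the same scaling computation in the limit $\rho\to 0$. The only cosmetic difference is that you estimate $\|T_{\alpha,m}f\|_{L^2(\mathbb{S}^{n-1})}$ directly on a spherical cap of surface measure $\asymp\rho^{n-1}$ about the pole $e_n$ (after relabeling $i^*=n$), whereas the paper first passes to the thickened shell $\{\,||x|-1|\le\delta^2\,\}$, works on the inscribed rectangle $R_\delta$ around the $x_1$-axis, and divides out the factor $\delta^2$ from the thickness; both versions yield identical exponent bookkeeping and the stated bound on $p$.
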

\begin{proof}
Owing to the smoothness of $T_{\alpha,m}(f)$, the inequality of (\ref{main-result}) actually equals
\begin{equation*}
\int_{||x|-1|\leq \delta^2}\l|T_{\alpha,m}(f)\r|^2dx \leq A_{\alpha,n,m} \delta^2 \|f\|_{L^p(\mathbb{R}^{n})}^2,
\end{equation*}
for sufficiently small positive $\delta(\delta<\frac{1}{2})$. Since the shell $\{x:1-\delta^2\leq |x|\leq 1+\delta^2\}$ contains the following rectangle
\[R_\delta:=\{x=(x_1,\cdots,x_n):|x_1-1|\leq c\delta^2, |x_i|\leq c\delta, 2\leq i\leq n\},\]
if $c$ is a sufficiently small constant, we obtain
\begin{equation}\label{inequality-2}
\int_{R_\delta}\l|T_{\alpha,m}(f)\r|^2dx \leq \int_{||x|-1|\leq \delta^2}\l|T_{\alpha,m}(f)\r|^2dx \leq A_{\alpha,n,m} \delta^2 \|f\|_{L^p(\mathbb{R}^{n})}^2.
\end{equation}
In this case, we set
\begin{equation*}
f(y)=\prod_{j=1}^nf_j(y_j);
\end{equation*}
where
\begin{align*}
f_1(y_1)&=e^{-iy_1^m}\chi_{E_1}(y_1), \\
f_j(y_j)&=\chi_{E_j}(y_j), \quad (2\leq j\leq n)
\end{align*}
in which $E_1:=\l[-c_1\delta^{-2/m}, c_1\delta^{-2/m}\r],\ E_j:=\l[-c_j\delta^{-\alpha_j/m}, c_j\delta^{-\alpha_j/m}\r] (2\leq j\leq n)$ are intervals and $c_j(1\leq j\leq n)$ are small positive constant we will decide later.
Then for a fixed point $x\in R_\delta$,
\begin{align*}
\l|T_{\alpha,m}f(x)\r|&=\l|\int_{\mathbb{R}^n}e^{i(x_1^{\alpha_1}y_1^m+\cdots+x_n^{\alpha_n}y_n^m)}\prod_{j=1}^nf_j(y_j)dy_1\cdots dy_n\r| \\
&=\prod_{j=1}^n\l|\int_{\mathbb{R}^n}e^{ix_j^{\alpha_j}y_j^m}f_j(y_j)dy_j\r|\\
&=\prod_{j=1}^n \l|F_j(x)\r|.
\end{align*}
Now we estimate each $\l|F_j(x)\r|$.\\

Case I: $j=1$.
\begin{align*}
\l|F_1(x)\r|&=\l|\int_{-\infty}^{+\infty}e^{ix_1^{\alpha_1}y_1^m}e^{-iy_1^m}\chi_{E_1}(y_1)dy_1\r| \\
&=\l|\int_{-c_1\delta^{-2/m}}^{+c_1\delta^{-2/m}}e^{i(x_1^{\alpha_1}-1)y_1^m}dy_1\r| \\
&=\l|\int_{-c_1\delta^{-2/m}}^{+c_1\delta^{-2/m}}e^{i(x_1-1)(x_1^{\alpha_1-1}+x_1^{\alpha_1-2}+\cdots+1)y_1^m}dy_1\r|.
\end{align*}
Provided that
\[\l|x_1-1\r|\leq c\delta^2,\quad |x_1|\leq 1+\delta^2<5/4,\quad |y_1|\leq c_1\delta^{-2/m},\] then
\[\l|(x_1^{\alpha_1}-1)y_1^m\r|=\l|(x_1-1)(x_1^{\alpha_1-1}+x_1^{\alpha_1-2}+\cdots+1)y_1^m\r|\leq \pi/3\]
if $c_1$ is small enough.\\
Therefore
\begin{equation*}
\l|F_1(x)\r| \geq \l|\int_{-c_1\delta^{-2/m}}^{+c_1\delta^{-2/m}}\cos\l((x_1^{\alpha_1}-1)y_1^m\r)dy_1\r|
\geq  c_1\delta^{-2/m}.
\end{equation*}

Case II: $2\leq j\leq n$.\\

In the same way, it follows that
\begin{equation*}
\l|F_j(x)\r|\geq c_j\delta^{-\alpha_j/m}.
\end{equation*}

In view of the arguments of Case I and Case II, we have
\begin{align*}
\int_{R_\delta}\l|T_{\alpha,m}(f)\r|^2dx &\geq \l(\prod_{j=1}^nc_j\r)\l|R_\delta\r|\l(\delta^{-2/m}\cdot\delta^{-\l(\sum_{j=2}^n\alpha_j\r)/m}\r)^2\\
&=\l(\prod_{j=1}^nc_j\r)\delta^{n+1}\l(\delta^{-2/m}\cdot\delta^{-\l(\sum_{j=2}^n\alpha_j\r)/m}\r)^2.
\end{align*}
On the other hand,
\begin{align*}
\|f\|_{L^p(\mathbb{R}^{n})}^p&=\int_{\mathbb{R}^n}\l|\prod_{j=1}^nf_j(y_j)\r|^pdy\\
&=\int_{\mathbb{R}^n}\l|\prod_{j=1}^n\chi_{E_j}(y_j)\r|^pdy\\
&\leq \l(\prod_{j=1}^nc_j\r)\l(\delta^{-2/m}\cdot\delta^{-\l(\sum_{j=2}^n\alpha_j\r)/m}\r).
\end{align*}
From (\ref{inequality-2}), we can obtain
\begin{equation*}
\delta^{n+1}\l(\delta^{-2/m}\cdot\delta^{-\l(\sum_{j=2}^n\alpha_j\r)/m}\r)^2\lesssim
\delta^2\l(\delta^{-2/m}\cdot\delta^{-\l(\sum_{j=2}^n\alpha_j\r)/m}\r)^{2/p},
\end{equation*}
and it equals
\begin{equation*}
\delta^{n+1-2\l(2+\sum_{j=1}^n\alpha_j\r)/m-2+2\l(2+\sum_{j=2}^n\alpha_j\r)/(pm)}\lesssim 1.
\end{equation*}
Let $\delta\to 0$, then the inequality above still holds and this requires
\begin{equation*}
n+1-2\l(2+\sum_{j=2}^n\alpha_j\r)/m-2+2\l(2+\sum_{j=2}^n\alpha_j\r)/(pm)\leq 0.
\end{equation*}
Thus the inequality is equivalent to
\begin{equation*}
p\leq \frac{2(2+\sum_{j=2}^n\alpha_j)}{2(2+\sum_{j=2}^n\alpha_j)-(n-1)m}.
\end{equation*}
The rectangle $R_\delta$ contained in the shell  $\{x:1-\delta^2\leq |x|\leq 1+\delta^2\}$ can be anyone of
\[R_\delta^j:=\{x=(x_1,\cdots,x_n):|x_j-1|\leq c\delta^2, |x_i|\leq c\delta, 1\leq i\neq j\leq n\}\quad (1\leq j\leq n).\]
Using the same arguments as in  Case I and Case II, we can easily carry out the necessary condition.
\end{proof}

\end{document}